\documentclass[12pt,leqno,a4paper]{article}
\usepackage{amsthm}
\usepackage{indentfirst}
\usepackage{amsmath}
\usepackage{mathrsfs}
\usepackage{paralist}
\usepackage{amssymb,enumerate}
\usepackage{hyperref}
\usepackage{framed}
\usepackage{extarrows}
\usepackage[hmargin=3cm,vmargin=3cm]{geometry}
\usepackage{url}
\usepackage{color}

\setlength{\pltopsep}{1ex}     
\setlength{\plitemsep}{0.5ex}

\theoremstyle{theorem}
\newtheorem{mainthm}{Theorem}

\theoremstyle{theorem}
\newtheorem{thm}{Theorem}[section]
\newtheorem{prop}[thm]{Proposition}
\newtheorem{lem}[thm]{Lemma}

\newtheorem{exmp}[thm]{Example}

\theoremstyle{definition}

\newtheorem{rem}[thm]{Remark}



\newcommand{\Cl}{\operatorname{Cl}}
\newcommand{\GL}{\operatorname{GL}}
\newcommand{\GU}{\operatorname{GU}}

\newcommand{\Irr}{\operatorname{Irr}}
\newcommand{\Ker}{\operatorname{Ker}}
\newcommand{\Lin}{\operatorname{Lin}}
\newcommand{\PGL}{\operatorname{PGL}}
\newcommand{\PSL}{\operatorname{PSL}}
\newcommand{\PSU}{\operatorname{PSU}}
\newcommand{\Res}{\operatorname{Res}}
\newcommand{\SL}{\operatorname{SL}}
\newcommand{\SO}{\operatorname{SO}}
\newcommand{\Sp}{\operatorname{Sp}}
\newcommand{\SU}{\operatorname{SU}}

\newcommand{\bG}{\mathbf{G}}
\newcommand{\tbG}{\tilde{\mathbf{G}}}

\newcommand{\bT}{\mathbf{T}}
\newcommand{\bS}{\mathbf{S}}

\newcommand{\bbG}{\mathbb{G}}
\newcommand{\bbF}{\mathbb{F}}
\newcommand{\bbN}{\mathbb{N}}

\newcommand{\bbZ}{\mathbb{Z}}
\newcommand{\barF}{\overline{\mathbb{F}}}

\newcommand{\cE}{\mathcal{E}}
\newcommand{\cF}{\mathcal{F}}
\newcommand{\cP}{\mathcal{P}}
\newcommand{\cZ}{\mathcal{Z}}

\newcommand{\hz}{\hat{z}}
\newcommand{\ts}{\tilde{s}}
\newcommand{\tDelta}{\tilde{\Delta}}
\newcommand{\tcF}{\widetilde{\mathcal{F}}}
\newcommand{\grp}[1]{\langle#1\rangle}

\newcommand{\set}[1]{\{\,#1\,\}}
\newcommand{\Set}[1]{\left\{\,#1\,\right\}}
\newcommand{\lc}[1]{{^{#1}\!}}

\title{Restrictions of irreducible characters of finite groups of Lie type to derived subgroups and regular semisimple elements
\footnote{The author gratefully acknowledges financial support by NSFC (No. 11901478, No. 11631001).
\newline \textbf{2010 Mathematics Subject Classification:} 20C15, 20C33.
\newline \textbf{Keyword:} irreducible characters, finite groups of Lie type, restrictions, derived subgroups, strongly regular semisimple elements}}
\author{Conghui Li\\
School of Mathematics, Southwest Jiaotong University,\\
Chengdu 611756, China\\
Email: liconghui@swjtu.edu.cn}
\date{}

\begin{document}

\setlength\abovedisplayskip{1ex plus 0.2ex minus 0.1ex}
\setlength\belowdisplayskip{1ex plus 0.2ex minus 0.1ex}

\raggedbottom
\maketitle

\begin{abstract}
In this note, we formulate an observation that ``almost all'' irreducible ordinary characters of finite groups of Lie type remain irreducible when restricted to the derived subgroups.
To see this, key ingredients are some asymptotic results for conjugacy classes of finite groups of Lie type and strongly regular semisimple elements in dual groups.
\end{abstract}

\section{Introduction}

In this note, assume $q$ is a power of a prime $p$.
Denote by $\bbF_q$ the finite field of $q$ elements and by $\barF_q$ the algebraic closure of $\bbF_q$.

By a glance at the character tables of $\GL_2(q)$, $\SL_2(q)$, $\GL_3(q)$, $\SL_3(q)$ (see \cite{DM20}, \cite{St51}, \cite{FS73}), a phenomenon can be noticed that ``almost all'' irreducible characters of $\GL_n(q)$ remain irreducible when restricted to $\SL_n(q)=[\GL_n(q),\GL_n(q)]$ for $n=2,3$.

The above observation can be formulated in general.
We first give some notation.
For any finite group $G$, denote by $\Irr(G)$ the set of all irreducible complex characters of $G$,
and denote by $\Irr_{i.r.d.}(G)$ the set of all irreducible complex characters of $G$ remaining irreducible when restricted to the derived subgroups $[G,G]$.
Let $\bbG=(X,R,Y,R^\vee,W\phi)$ be a generic group (see \cite[\S22.2]{MT11} for example).
Denote by $(\bG,\bT,F)$ the triple determined by $\bbG$ and $q$ consisting of a connected reductive group $\bG$ over $\barF_q$, a Frobenius map $F$ of $\bG$ defining an $\bbF_q$-structure on $\bG$ and $\bT$ an $F$-stable maximally split maximal torus of $\bG$.
Here, there is no loss to exclude Suzuki and Ree groups and to consider only Frobenius maps.
Then $\bbG(q)=\bG^F$ is the finite group of Lie type determined by $\bbG$ and $q$.
The above observation can be formulated as follows.

\begin{mainthm}\label{mainthm}
Let $\bbG=(X,R,Y,R^\vee,W\phi)$ be a generic group.
Denote by $(\bG,\bT,F)$ the triple determined by $\bbG$ and $q$, and set $\bbG(q)=\bG^F$.
Then we have that
\[ \lim_{q\to\infty}\frac{|\Irr_{i.r.d.}(\bbG(q))|}{|\Irr(\bbG(q))|}=1. \]
\end{mainthm}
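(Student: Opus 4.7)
The strategy combines a Clifford-theoretic criterion for irreducible restriction with Lusztig's classification of $\Irr(\bG^F)$ and asymptotics for strongly regular semisimple classes in the dual group. First I would record the criterion: since $A := \bG^F/[\bG^F,\bG^F]$ is abelian, Frobenius reciprocity yields
\[
\langle \chi|_{[\bG^F,\bG^F]}, \chi|_{[\bG^F,\bG^F]} \rangle
= \sum_{\lambda \in \Lin(A)} \langle \chi, \chi\lambda \rangle
= |T(\chi)|,
\]
where $T(\chi) := \{\lambda \in \Lin(A) : \chi\lambda = \chi\}$, so that $\chi \in \Irr_{i.r.d.}(\bG^F)$ if and only if $T(\chi)$ is trivial. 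The problem is thus reduced to showing that the fraction of $\chi \in \Irr(\bG^F)$ with $T(\chi) = \{1\}$ tends to $1$ as $q \to \infty$.

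Next, I would use Lusztig's partition $\Irr(\bG^F) = \bigsqcup_{[s]} \cE(\bG^F, s)$ over $\bG^{*F}$-classes of semisimple $s$ in a dual group $\bG^*$. For $s$ strongly regular, $C_{\bG^*}(s) = \bT^*$ is a maximal torus (in particular connected), and the Jordan decomposition of characters gives $\cE(\bG^F, s) = \{\chi_s\}$ consisting of a single character. By the standard dictionary (cf.\ Digne--Michel, Bonnaf\'e), multiplication $\chi \mapsto \chi\lambda$ shifts Lusztig series along a canonical map $\Lin(\bG^F) \to Z(\bG^*)^F$, $\lambda \mapsto z_\lambda$, in the sense that $\chi_s \lambda = \chi_{s z_\lambda}$. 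Since both $s$ and $s z$ are strongly regular with centralizer $\bT^*$, any element of $\bG^{*F}$ conjugating one to the other must lie in $N_{\bG^*}(\bT^*)^F$, so $\chi_s \lambda = \chi_s$ translates to the existence of a non-trivial $w$ in the $F$-twisted Weyl group of $\bT^*$ with $w(s) = s z_\lambda$. Thus $\chi_s \notin \Irr_{i.r.d.}(\bG^F)$ only if $s$ lies in the ``bad'' locus
\[
\mathcal{B} := \bigcup_{w \neq 1,\ z \neq 1} \{ t \in \bT^{*F} : w(t) = t z \},
\]
which is a finite union of cosets of proper subtori and hence contains $O(q^{r-1})$ rational points, where $r$ is the rank.

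Finally I would carry out the asymptotic count. Both $|\Irr(\bG^F)| = |\Cl(\bG^F)|$ and the number of strongly regular semisimple classes of $\bG^{*F}$ are polynomials in $q$ of degree $r$ with the same leading coefficient, because the contributions from non-semisimple and from semisimple-but-non-strongly-regular classes are polynomials in $q$ of strictly smaller degree (a standard dimension count on the reductive group). Removing the at most $O(q^{r-1})$ bad classes from the set of strongly regular ones yields $|\Irr_{i.r.d.}(\bG^F)| \ge |\Irr(\bG^F)|(1 - O(q^{-1}))$, and dividing by $|\Irr(\bG^F)|$ gives the limit $1$. The principal obstacle will be making the dictionary $\Lin(\bG^F) \to Z(\bG^*)^F$ entirely precise when $Z(\bG)$ is disconnected (exactly the non-trivial regime of the theorem, since otherwise $\bG^F = [\bG^F,\bG^F]$ is often false only for mild reasons), and verifying the bad-locus estimate $O(q^{r-1})$ uniformly in the Frobenius twist via a case analysis based on the isogeny type $(X, R, Y, R^\vee)$ and the action of $W\phi$ on the character lattice.
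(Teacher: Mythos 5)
Your plan follows the same route as the paper's proof: reduce to the criterion that $\chi$ restricts irreducibly if and only if no nontrivial linear character of $\bG^F$ fixes it, identify those linear characters with $Z(\bG^*)^F$ acting on Lusztig series by central translation, use that a strongly regular semisimple $s$ gives a singleton series $\{\chi_s\}$ with $\hat z\chi_s=\chi_{zs}$, and then show that the $s$ admitting $w(s)=sz$ with $z\neq1$ form a lower-order set, while the strongly regular classes dominate the class count (the paper's Theorems \ref{mainthm-srs} and \ref{mainthm-cl}, which you may simply cite rather than re-derive). One genuinely nice point in your write-up: the Frobenius-reciprocity identity $\langle\Res\chi,\Res\chi\rangle=|T(\chi)|$ gives the direction you actually need without invoking Lusztig's multiplicity-freeness theorem, which the paper uses via Lemma \ref{lem-mult-free}.

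However, the two items you defer are exactly where the remaining content lies, and as written they are gaps. First, the standard dictionary is $Z(\bG^*)^F\cong\Irr\bigl(\bG^F/\tau(\bG_{sc}^F)\bigr)$, not $Z(\bG^*)^F\cong\Lin(\bG^F)$: a priori $[\bG^F,\bG^F]$ may be strictly smaller than $\tau(\bG_{sc}^F)$, in which case $T(\chi)$ contains linear characters whose effect on Lusztig series is not controlled by central translation and your criterion does not close. The paper fixes this by quoting perfectness of $\bG_{sc}^F$ for large $q$ (\cite[Theorem 24.17]{MT11}), which gives $[\bG^F,\bG^F]=\tau(\bG_{sc}^F)$ and hence the isomorphism (\ref{equ-linear}); it is a one-line repair, but it is precisely the point where $q\to\infty$ enters and must be stated. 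Second, your bad locus $\mathcal B$ sits inside a single $\bT^{*F}$, whereas the strongly regular classes of $\bG^{*F}$ are spread over all twisted tori; what is needed is a bound on the number of \emph{classes} $s$ with $w(s)=sz$, counted through $F$-stable classes, i.e.\ $W$-orbits on the maximally split torus and twisted fixed points $\bigl(\cap\,\cdots\bigr)^{wF}$ of fixed-point subgroups (which are diagonalizable, not subtori, though with component groups bounded independently of $q$). This uniform-in-twist estimate is the content of the paper's Lemma \ref{lem-srs0} and Steps 1--2 of Theorem \ref{mainthm-srs}, carried out by bounding $|\bS^{wF}|$ by a constant times $\bigl|\det_{X(\bS_0^\circ)}(q-(w\phi)^{-1})\bigr|$ with $\dim\bS_0^\circ<l$; your ``$O(q^{r-1})$ points on a coset of a proper subtorus'' is the right idea for one torus, but you label the uniform version ``the principal obstacle'' and do not carry it out, and without it the asymptotic count of good classes (and its comparison with $|\Irr(\bG^F)|=|\Cl(\bG^F)|$, which also uses $|Z(\bG)^{\circ F}|$ and $|Z(\bG^*)^{\circ F}|$ having the same asymptotics) is not established.
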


A key ingredient is an asymptotic result for strongly regular semisimple elements.
For any finite group $G$, denote by $\Cl(G)$ the set of conjugacy classes of $G$.
Let $\bG$ be a connected reductive group.
Recall that (\cite{St65}) a semisimple element $s$ of $\bG$ is (strongly) regular if $C_{\bG}(s)^\circ$ ($C_{\bG}(s)$) is a maximal torus of $\bG$.
For a connected reductive group or a finite group of Lie type $G$,
denote by $\Cl(G)_s$, $\Cl(G)_{r.s.}$ and $\Cl(G)_{s.r.s}$ the set of conjugacy classes of semisimple, regular semisimple and strongly regular semisimple elements of $G$ respectively.

\begin{mainthm}\label{mainthm-srs}
Let $\bbG=(X,R,Y,R^\vee,W\phi)$ be a generic group and denote by $l$ the rank of the root datum $(X,R,Y,R^\vee)$.
Denote by $(\bG,\bT,F)$ the triple determined by $\bbG$ and $q$, and set $\bbG(q)=\bG^F$.
Then we have that
\[ \lim_{q\to\infty}\frac{|Z(\bG)^{\circ F}|q^l}{|\Cl(\bbG(q))_{s.r.s}|}
=\lim_{q\to\infty}\frac{|Z(\bG)^{\circ F}|q^l}{|\Cl(\bbG(q))_{r.s.}|}
=\lim_{q\to\infty}\frac{|Z(\bG)^{\circ F}|q^l}{|\Cl(\bbG(q))_s|}=1. \]
\end{mainthm}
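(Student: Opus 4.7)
Write $r=l+\dim Z(\bG)^\circ$ for the rank of $\bG$. The strategy is to compute $|\Cl(\bbG(q))_s|$ asymptotically via the adjoint quotient, then show that the ``defective'' classes (non-regular, or non-strongly-regular) form a negligible fraction. Since $\bG$ is connected reductive, Steinberg's theorem identifies semisimple $\bG^F$-classes (up to a correction controlled by the disconnectedness of the centre) with $F$-fixed points of the adjoint quotient $\bT/W$, where $W=N_{\bG}(\bT)/\bT$. The variety $\bT/W$ is geometrically irreducible, affine, and of dimension $r$ over $\barF_q$, so Lang--Weil yields
\[
|(\bT/W)^F|=q^r+O(q^{r-1/2}).
\]
Combining this with $|Z(\bG)^{\circ F}|=q^{\dim Z(\bG)^\circ}(1+O(q^{-1}))$---immediate from the standard formula for the cardinality of an $F$-stable torus via the action of $F$ on its cocharacter lattice---yields $|\Cl(\bbG(q))_s|\sim|Z(\bG)^{\circ F}|q^l$, which is the third equality.

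For the first two equalities, I will prove that both $|\Cl(\bbG(q))_s|-|\Cl(\bbG(q))_{r.s.}|$ and $|\Cl(\bbG(q))_s|-|\Cl(\bbG(q))_{s.r.s}|$ are $O(q^{r-1})$, and hence negligible compared with $q^r$. A semisimple $s\in\bT$ fails to be regular iff it lies on some root kernel $\ker\alpha$, $\alpha\in R$; the union $\bigcup_{\alpha\in R}\ker\alpha$ is a closed $W$-stable subvariety of $\bT$ of dimension $r-1$, so by Lang--Weil its image in $\bT/W$ contributes only $O(q^{r-1})$ $F$-fixed points. For strong regularity, note that if $s\in\bT$ is regular semisimple then $C_{\bG}(s)^\circ=\bT$, hence $C_{\bG}(s)\subseteq N_{\bG}(\bT)$, and the component group $C_{\bG}(s)/C_{\bG}(s)^\circ$ is identified with the Weyl stabilizer $W_s=\set{w\in W:w\cdot s=s}$; thus $s$ fails to be strongly regular iff $s$ lies either on some $\ker\alpha$ or on some fixed-point subtorus $\bT^w$ with $1\ne w\in W$. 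Since each $\bT^w$ ($w\ne 1$) has dimension at most $r-1$ (because $w$ acts non-trivially on the cocharacter lattice of $\bT$), Lang--Weil again gives the desired $O(q^{r-1})$ bound.

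The technical point I expect to require the most care is the precise relation between semisimple $\bG^F$-classes and $(\bT/W)^F$ when $Z(\bG)$ is disconnected, since an $F$-stable semisimple $\bG$-class may split into several $\bG^F$-classes (the number of pieces being $|H^1(F,C_{\bG}(s))|$, bounded by $|Z(\bG)/Z(\bG)^\circ|$). I plan to handle this by a standard reduction to a regular embedding $\bG\hookrightarrow\tbG$ with $Z(\tbG)$ connected, where the identification above is exact, and then transfer the asymptotic back to $\bG^F$. The crucial observation is that strongly regular elements have connected centralizer (a torus), so Lang's theorem forces no splitting on the strongly regular locus; the additional splittings for non-strongly-regular classes are bounded by a constant per class and are therefore absorbed in the $O(q^{r-1})$ error already identified.
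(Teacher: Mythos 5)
Your skeleton is the same as the paper's: identify semisimple classes with $F$-stable $W$-orbits on $\bT$, note that the non-regular locus is $\bigcup_{\alpha\in R}\Ker\alpha$ and that failure of strong regularity additionally forces $s\in\bT^w$ for some $1\neq w\in W$, bound these loci by codimension $\geq1$, and then control the passage from $F$-stable $\bG$-classes to $\bG^F$-classes via connectedness of centralizers of strongly regular elements plus a uniformly bounded splitting elsewhere. The only methodological difference is that you extract the main term and the error terms from Lang--Weil applied to $\bT/W$ and to images of the bad loci, while the paper uses Carter's exact formula $|\Cl(\bG)_s^F|=|Z(\bG)^{\circ F}|q^l$ together with elementary counts of $(wF)$-fixed points on diagonalizable subgroups via $|\det_{X}(q-(w\phi)^{-1})|$. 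If you keep Lang--Weil you must justify uniformity of the implied constants as $q$ (and $p$) vary -- all the varieties involved arise by base change from data attached to the fixed root datum, so this is fixable, but the paper's explicit determinant bounds sidestep the issue entirely, and for the main term you could simply quote the exact Steinberg--Carter count instead of re-deriving it asymptotically.

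There is, however, one concretely wrong step in your treatment of the splitting correction. The number of $\bG^F$-classes inside an $F$-stable semisimple class of $s$ is $|H^1(F,C_\bG(s)/C_\bG(s)^\circ)|$, and this is \emph{not} bounded by $|Z(\bG)/Z(\bG)^\circ|$; moreover, a regular embedding into a group with connected centre does \emph{not} make the identification of semisimple $\tbG^F$-classes with $(\bT/W)^F$ exact. Connectedness of centralizers of semisimple elements is governed by simple connectedness of the \emph{derived subgroup}, not by connectedness of the centre: for $\bG=\PGL_2$ (trivial, hence connected, centre) the element $s$ which is the image of $\mathrm{diag}(1,-1)$ has $C_\bG(s)/C_\bG(s)^\circ$ of order $2$, and its $F$-stable class splits into two $\PGL_2(q)$-classes for odd $q$. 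So the regular-embedding detour you sketch would not do what you want, and the bound you quote is false. Fortunately your closing observation is the correct and sufficient replacement, and it is exactly the paper's Step 3: strongly regular elements have centralizer a maximal torus, hence connected, so no splitting occurs on that locus; for the remaining semisimple classes the component group $C_\bG(s)/C_\bG(s)^\circ$ is bounded purely in terms of the root datum (it is controlled by the fundamental group of $[\bG,\bG]$, independently of $q$), so the at-most-constant splitting is absorbed into the $O(q^{r-1})$ count of non-strongly-regular $F$-stable classes. With the $|Z(\bG)/Z(\bG)^\circ|$ claim and the regular-embedding reduction deleted in favour of that argument, your proof is correct and essentially coincides with the paper's.
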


Next is an asymptotic result for numbers of conjugacy classes of finite groups of Lie type.

\begin{mainthm}\label{mainthm-cl}
Let $\bbG=(X,R,Y,R^\vee,W\phi)$ be a generic group and denote by $l$ the rank of the root datum $(X,R,Y,R^\vee)$.
Denote by $(\bG,\bT,F)$ the triple determined by $\bbG$ and $q$, and set $\bbG(q)=\bG^F$.
Then we have that
\[ \lim_{q\to\infty}\frac{|Z(\bG)^{\circ F}|q^l}{|\Cl(\bbG(q))|}=1. \]
\end{mainthm}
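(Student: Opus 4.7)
My plan is to deduce Theorem~\ref{mainthm-cl} from Theorem~\ref{mainthm-srs} by bracketing $|\Cl(\bbG(q))|$ between two quantities that share the same leading order $|Z(\bG)^{\circ F}|q^l$. The main tool will be the Jordan decomposition in $\bG^F$: since $F$ commutes with Jordan decomposition, every $g\in\bG^F$ has a unique factorization $g=su$ with $s,u\in\bG^F$, $s$ semisimple, $u$ unipotent and $[s,u]=1$. This yields the classical parametrization
\[
|\Cl(\bbG(q))| \;=\; \sum_{[s]\in\Cl(\bbG(q))_s} N_s,
\]
where $N_s$ denotes the number of $C_{\bG}(s)^F$-conjugacy classes of unipotent elements of $C_{\bG}(s)^F$.

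I would then establish two properties of $N_s$. First, for strongly regular semisimple $s$, the group $C_{\bG}(s)$ is by definition a maximal torus, so $C_{\bG}(s)^F$ is a finite abelian group of order coprime to $p$, and therefore $N_s=1$. Second, for arbitrary semisimple $s$, I would show that $N_s\le M$ for some constant $M=M(\bbG)$ independent of $q$: the connected centralizer $C_{\bG}(s)^\circ$ is a pseudo-Levi subgroup of $\bG$, there are only finitely many $\bG$-conjugacy classes of such subgroups (each carrying only finitely many possible $F$-structures), and every connected reductive group over $\overline{\bbF}_q$ has only finitely many unipotent classes, with component groups of their centralizers of uniformly bounded order.

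Combining these two facts yields the sandwich
\[
|\Cl(\bbG(q))_{s.r.s.}| \;\le\; |\Cl(\bbG(q))| \;\le\; |\Cl(\bbG(q))_{s.r.s.}| + M\bigl(|\Cl(\bbG(q))_s|-|\Cl(\bbG(q))_{s.r.s.}|\bigr),
\]
and Theorem~\ref{mainthm-srs} forces both bounds to be $|Z(\bG)^{\circ F}|q^l(1+o(1))$; a squeeze then produces the claimed limit. The main obstacle will be the second property of $N_s$: when $s$ is regular but not strongly regular, $C_{\bG}(s)$ is disconnected, and counting unipotent $C_{\bG}(s)^F$-classes requires controlling the interaction between the component group $C_{\bG}(s)/C_{\bG}(s)^\circ$ and the action of $F$. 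This is manageable because both the component group and the number of unipotent classes in $C_{\bG}(s)^\circ$ are uniformly bounded in $q$, but some careful bookkeeping is needed to extract a clean $q$-uniform bound.
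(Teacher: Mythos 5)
Your proposal is correct and follows essentially the same route as the paper: decompose classes via Jordan decomposition, note that (strongly) regular semisimple classes carry only the trivial unipotent part while the remaining semisimple classes contribute a number of unipotent classes bounded independently of $q$, and then squeeze using the asymptotics of Theorem~\ref{mainthm-srs}. The only cosmetic difference is that the paper phrases the first step with regular (rather than strongly regular) semisimple elements and leaves the $q$-uniform bound on unipotent class numbers as an assertion, whereas you spell out the sandwich inequality and the bookkeeping for disconnected centralizers explicitly.
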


The result of the above theorem for some classical groups has been included in \cite{FG12}.
An intuitive explanation of the above theorems is a result in \cite{St65} claiming that the strongly regular semisimple elements form a dense set in $\bG$.

\paragraph{Acknowledgement}
I am extremely grateful to Professor Meinolf Geck for the suggestion to consider strongly regular semisimple elements.

\section{Proofs}\label{proofs}

For a connected reductive group $\bG$ with a Frobenius map $F$,
we denote by $\Cl(\bG)_s^F$, $\Cl(\bG)_{r.s.}^F$ and $\Cl(\bG)_{s.r.s}^F$ the set of $F$-stable conjugacy classes of semisimple, regular semisimple and strongly regular semisimple elements of $\bG$ respectively.

\begin{proof}[Proof of Theorem \ref{mainthm-srs}]
Keep the notation in Theorem \ref{mainthm-srs}.
Our proof is divided into three steps, first two of which use some arguments in \cite[Chapter 3]{Car93}.
Set $W=N_{\bG}(\bT)/\bT$.
Denote by $\bT/W$ the set of $W$-orbits on $\bT$.
By \cite[3.7.2]{Car93}, $\Cl(\bG)_s^F$ is in bijection with the set $(\bT/W)^F$ of $F$-stable $W$-orbits on $\bT$,
and by \cite[Theorem 3.7.6(i)]{Car93}, $|\Cl(\bG)_s^F|=|Z(\bG)^{\circ F}|q^l$.

\emph{Step 1.}
We first show that
\[ \lim_{q\to\infty}\frac{|Z(\bG)^{\circ F}|q^l}{|\Cl(\bG)_{r.s.}^F|}=1. \]
Set
\[ A= \Set{ t\in\bT \mid \textrm{$F(t)=t^w$ for some $w\in W$ and $\alpha(t)=1$ for some $\alpha\in R$}}. \]
For any $\alpha\in R$ and $w\in W$,
denote by $\alpha^{\grp{w\phi}}$ the $\grp{w\phi}$-orbit of $\alpha$ in $X(\bT)$.
Then we claim that $A= \cup_{w\in W} \cup_{\alpha\in R} \left(\cap_{\beta\in\alpha^{\grp{w\phi}}}\Ker\beta\right)^{wF}$.
First, assume $t\in A$, then there is $w\in W$ and $\alpha\in R$ such that $\lc{w}F(t)=t$ and $\alpha(t)=1$.
In particular, $t\in\Ker\alpha$.
Then $\alpha^{wF}(t)=\alpha(\lc{w}F(t))=\alpha(t)=1$.
Since $F$ acts on $X(\bT)$ as $q\phi$, $(\alpha^{w\phi}(t))^q=1$ and thus $\alpha^{w\phi}(t)=1$.
So $t\in \Ker (\alpha^{w\phi})$.
On the other hand, the same argument shows that $\cap_{\beta\in\alpha^{\grp{w\phi}}}\Ker\beta$ is stable under the action of $wF$.

Note that $W$ acts on $A$ and $F$-stable conjugacy classes of non-regular semisimple elements of $\bG$ are in bijection with $W$-orbits on $A$.
Then it suffices to prove that
\[ \lim_{q\to\infty}\frac{|A/W|}{|Z(\bG)^{\circ F}|q^l}=0. \]
As in \cite[3.7.4]{Car93}, the number of $W$-orbits on $A$ is
\begin{align*}
&\sum_{t\in A} \frac{|W_t|}{|W|} = \frac{1}{|W|} \sum_{t\in A} \sum_{w\in W, t^w=t} 1
= \frac{1}{|W|} \sum_{t\in A} \sum_{w\in W, t^w=F(t)} 1 \\
= &\frac{1}{|W|} \sum_{w\in W} \sum_{t\in A, t^w=F(t)} 1
\leq \frac{1}{|W|} \sum_{w\in W} \sum_{\alpha\in R}
\left| \left( \cap_{\beta\in\alpha^{\grp{w\phi}}}\Ker\beta \right)^{wF} \right|.
\end{align*}
Now, we estimate $\left| \left( \cap_{\beta\in\alpha^{\grp{w\phi}}}\Ker\beta \right)^{wF} \right|$.
Set $\bS=\cap_{\beta\in\alpha^{\grp{w\phi}}}\Ker\beta$ and $\bS_0=\bS\cap[\bG,\bG]$.
Since $\bT=Z(\bG)^\circ(\bT\cap[\bG,\bG])$ and $Z(\bG)^\circ \subseteq \bS$, $\bS=Z(\bG)^\circ\bS_0$.
Let $L(\alpha)$ be the subgroup of $X(\bT)$ generated by $\alpha^{\grp{w\phi}}$.
Then by \cite[(1.7)]{Bon06}, $X(\bS/\bS^\circ) \cong (X(\bT)/L(\alpha))_{p'}$.
So $|\bS/\bS^\circ|$ is bounded by the root datum $(X,R,Y,R^\vee)$ and independent of $q$.
By Lang-Steinberg Theorem, $\bS^{wF}/\bS^{\circ wF} \cong (\bS/\bS^\circ)^{wF}$.
Thus $|\bS^{wF}/\bS^{\circ wF}|$ is bounded by the root datum $(X,R,Y,R^\vee)$ and independent of $q$.
By \cite[Proposition 4.4.9]{DM20},
\[ |\bS_0^{\circ wF}| = \left|\det\nolimits_{X(\bS_0^\circ)} (wF-1)\right|
= \left|\det\nolimits_{X(\bS_0^\circ)} (q-(w\phi)^{-1})\right|; \]
here note that $\det(w)$ and $\det(\phi)$ are signs.
Note that $\bS^\circ=Z(\bG)^\circ\bS_0^\circ$ since the dimensions of these two tori are equal,
then by the arguments in the proof of \cite[Proposition 3.3.7]{Car93},
$|\bS^{\circ wF}| = |(Z(\bG)^{\circ F}| |\bS_0^{\circ wF}|$.
Thus we have that
\[ |\bS^{wF}| \leq c\left| (Z(\bG)^{\circ F} \right| \left|\det\nolimits_{X(\bS_0^\circ)} (q-(w\phi)^{-1})\right| \]
for some constant $c$ determined by the root datum $(X,R,Y,R^\vee)$ and independent of $q$.
Since the rank of $\bS_0^\circ$ is strictly less than $l$, $|\bS^{wF}| \leq c\left| (Z(\bG)^\circ)^F \right| f(q)$,
where $f(q)$ is a polynomial in $q$ of degree strictly less than $l$ with coefficients bounded by the root datum $(X,R,Y,R^\vee)$ and $\phi$.
Thus our assertion follows.

\emph{Step 2.}
Next we use the same arguments again to show that
\[ \lim_{q\to\infty}\frac{|Z(\bG)^{\circ F}|q^l}{|\Cl(\bG)_{s.r.s.}^F|}=1. \]
Set
\[ B= \Set{ t\in\bT \mid \textrm{$F(t)=t^w$ for some $w\in W$ and $\lc{w'}t=t$ for some $w'\in W$}}. \]
For any $w'\in W$ and $w\in W$, we denote by $w'^{\grp{w\phi}}$ the $\grp{w\phi}$-orbit of $w'$.
Since $F$ and $\phi$ act on $W$ in the same way, we can show as in \emph{step 1.} that $B= \cup_{w\in W} \cup_{w'\in W} \left(\cap_{w''\in w'^{\grp{w\phi}}}\bT^{w''}\right)^{wF}$.
Note that $W$ acts on $B$ and $F$-stable conjugacy classes of non strongly regular semisimple elements in $\bG$ are in bijection with $W$-orbits on $A \cup B$.
Then by \emph{Step 1.} it suffices to prove that
\[ \lim_{q\to\infty}\frac{|B/W|}{|Z(\bG)^{\circ F}|q^l}=0. \]
As before, the number of $W$-orbits on $B$ is
\[ \sum_{t\in A} \frac{|W_t|}{|W|} \leq
\frac{1}{|W|} \sum_{w\in W} \sum_{w'\in W} \left| \left( \cap_{w''\in w'^{\grp{w\phi}}}\bT^{w''} \right)^{wF} \right|. \]
Then we estimate $\left| \left( \cap_{w''\in w'^{\grp{w\phi}}}\bT^{w''} \right)^{wF} \right|$.
Set $\bS=\cap_{w''\in w'^{\grp{w\phi}}}\bT^{w''}$ and
\[ \bS^\perp = \set{ \chi\in X(\bT) \mid \chi(x)=1, \forall x\in\bS }. \]
So $X(\bS)=X(\bT)/\bS^\perp$ and $\bS=\bS^{\perp\perp}= \set{ t\in\bT \mid \chi(t)=1, \forall \chi\in\bS^\perp }$; see for example \cite[\S1.12]{Car93}.
Note that the number of such $\bS$ to be considered is finite and depends only on the root datum $(X,R,Y,R^\vee)$.
So $|\bS/\bS^\circ|=|X(\bT)/\bS^\perp|_{tor}=|X(\bT)/\bS^\perp|_{p'}$ is bounded by the root datum $(X,R,Y,R^\vee)$ and independent of $q$.
Then with $L(\alpha)$ replaced by $\bS^\perp$, the assertion follows as in \emph{Step 1}. 

\emph{Step 3.}
Now, since $C_{\bG}(s)$ is connected for any strongly regular semisimple element $s$ of $\bG$,
the set $\Cl(\bG)_{s.r.s.}^F$ corresponds bijectively to the set $\Cl(\bG^F)_{s.r.s.}$.
For any non strongly regular semisimple element $s\in\bG^F$, the number of $\bG^F$-conjugacy classes in the $\bG$-conjugacy class of $s$ is $|H^1(F,C_\bG(s)/C_\bG(s)^\circ)|$ (see for example \cite[Proposition 4.2.14]{DM20}), thus this number is not greater than $|C_\bG(s)/C_\bG(s)^\circ|$ and is bounded by the root datum $(X,R,Y,R^\vee)$ and independent of $q$.
Then Theorem \ref{mainthm-srs} follows from \cite[Theorem 3.7.6(i)]{Car93} and the above two steps.
\end{proof}

\begin{proof}[Proof of Theorem \ref{mainthm-cl}]
Keep the notation in Theorem \ref{mainthm-cl}.
By Jordan decomposition of elements, any $x\in\bG^F$ is of the form $x=su$ with $s$ semisimple and $u\in (C_{\bG}(s)^\circ)^F$; for regular semisimple element $s$ of $\bG^F$, $u$ can only be $1$.
For non-regular semisimple element $s$ of $\bG^F$, the number of unipotent classes in $(C_{\bG}(s)^\circ)^F$ is bounded by the root datum $(X,R,Y,R^\vee)$ and independent of $q$.
Then Theorem \ref{mainthm-cl} follows from Theorem \ref{mainthm-srs}.
\end{proof}

To prove Theorem \ref{mainthm}, we need some preparations.
Let $\bG$ be a connected reductive group and $\tau: \bG_{sc} \to [\bG,\bG]$ be a simply connected covering compatible with the Frobenius maps (denoted both as $F$) on $\bG$ and $\bG_{sc}$ defining $\bbF_q$-structures.
Denote by $\bG^*$ the dual group of $\bG$ and again by $F$ the corresponding Frobenius map on $\bG^*$.
By \cite[(8.19)]{CE04}, there is an isomorphism
\[ Z(\bG^*)^F \to \Irr(\bG^F/\tau(\bG_{sc}^F)), \ z \mapsto \hat{z}. \]
By \cite[Theorem 24.17]{MT11}, if $q$ is large enough, $\bG_{sc}^F$ is perfect, and thus $\tau(\bG_{sc}^F)=[\bG^F,\bG^F]$.
Then there is an isomorphism
\begin{equation}\label{equ-linear}
Z(\bG^*)^F \to \Irr(\bG^F/[\bG^F,\bG^F]),\ z \mapsto \hat{z},\quad \textrm{when $q$ is large enuough}.
\end{equation}

\begin{lem}\label{lem-mult-free}
Let $\bG$ be a connected reductive group with a Frobenius map defining an $\bbF_q$-structure on $\bG$.
Assume $q$ is large enough.
Then $\Res^{\bG^F}_{[\bG^F,\bG^F]}$ is multiplicity free.
\end{lem}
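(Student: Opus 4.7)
The plan is to reduce to the connected-center case via a regular embedding. I would fix a regular embedding $\bG \hookrightarrow \tbG$ in the sense of \cite{Bon06}: $\tbG$ is connected reductive with connected center, $\tbG = \bG \cdot Z(\tbG)^\circ$, and $[\tbG, \tbG] = [\bG, \bG]$. The same simply connected cover $\bG_{sc}$ then covers $[\tbG, \tbG]$, and since $q$ is taken large enough for $\bG_{sc}^F$ to be perfect,
\[
\tau(\bG_{sc}^F) = [\bG^F, \bG^F] = [\tbG^F, \tbG^F] =: N.
\]

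Next, I would invoke two multiplicity-free statements. Fact (A): for the regular embedding $\bG \subseteq \tbG$, $\Res^{\tbG^F}_{\bG^F} \tilde\chi$ is multiplicity-free for every $\tilde\chi \in \Irr(\tbG^F)$; this is due to Lusztig and is treated in \cite{Bon06} and \cite{CE04}. Fact (B): for $\tbG$ with connected center, $\Res^{\tbG^F}_N \tilde\chi$ is multiplicity-free for every $\tilde\chi \in \Irr(\tbG^F)$. Granting both, given $\chi \in \Irr(\bG^F)$ pick $\tilde\chi \in \Irr(\tbG^F)$ lying over $\chi$, so by (A) we have $\Res^{\tbG^F}_{\bG^F} \tilde\chi = \chi_1 + \cdots + \chi_k$ with the $\chi_i$ pairwise distinct and $\chi$ among them. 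Restricting further to $N$,
\[
\Res^{\tbG^F}_N \tilde\chi = \sum_{i=1}^k \Res^{\bG^F}_N \chi_i,
\]
and the left side is multiplicity-free by (B). Hence each summand, in particular $\Res^{\bG^F}_N \chi$, is multiplicity-free.

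The main obstacle will be the verification of (B). By Clifford theory for the abelian quotient $\tbG^F/N$, writing $\Res^{\tbG^F}_N \tilde\chi = e(\psi_1 + \cdots + \psi_k)$ with inertia subgroup $T \leq \tbG^F$ stabilizing $\psi_1$, one has $|\{\hat z \in \Irr(\tbG^F/N) : \tilde\chi \otimes \hat z = \tilde\chi\}| = e^2 k$, and proving $e = 1$ amounts to showing that $\psi_1$ extends to an ordinary character of $T$. Under the connected-center hypothesis, the identification $\Irr(\tbG^F/N) \cong Z(\tbG^*)^F$ coming from (\ref{equ-linear}), together with Lusztig's Jordan decomposition---a bijection between each $\cE(\tbG^F, \ts)$ and the unipotent characters of the connected group $C_{\tbG^*}(\ts)^F$---furnishes the desired extensions. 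This is the technical heart of the argument; once (B) is established, the reduction to $\bG$ via the regular embedding is routine.
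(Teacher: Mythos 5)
Your overall architecture (pass to a regular embedding $\iota:\bG\to\tbG$, note $N:=[\bG^F,\bG^F]=[\tbG^F,\tbG^F]=\tau(\bG_{sc}^F)$ for $q$ large, and deduce the lemma for $\bG$ from multiplicity-freeness of $\Res^{\tbG^F}_N$) is sound, and your Clifford-theoretic bookkeeping ($|\{\hat z:\tilde\chi\hat z=\tilde\chi\}|=e^2k$ for the abelian quotient $\tbG^F/N$, with $e=1$ equivalent to extendibility of $\psi_1$ to its inertia group) is correct. But the proposal has a genuine gap: your Fact (B) is not proved, and it is not a side issue --- it \emph{is} the lemma in the connected-centre case, and it contains the hardest instance. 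Note that for $\tbG$ of adjoint type (e.g.\ $\PGL_n$, which has connected centre, so regular embeddings give you nothing new) one has $[\tbG,\tbG]=\tbG$, so Lusztig's multiplicity-freeness theorem from \cite{Lu88}, which concerns restriction to $[\tbG,\tbG]^F$, says literally nothing about restriction to the strictly smaller finite derived subgroup $N=\tau(\bG_{sc}^F)$. Your suggested mechanism --- that the isomorphism (\ref{equ-linear}) together with Lusztig's Jordan decomposition ``furnishes the desired extensions'' of a constituent $\psi_1\in\Irr(N)$ to its inertia subgroup $T\leq\tbG^F$ --- is only asserted: Jordan decomposition parametrizes $\Irr(\tbG^F)$ and the series $\cE(\tbG^F,\ts)$; it gives no handle on characters of the subgroup $N$, on their inertia groups, or on the vanishing of the relevant cohomological obstruction to extension. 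As written, the ``technical heart'' is exactly the unproved step.

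For comparison, the paper closes precisely this gap by a concrete dévissage rather than by extending characters of $N$: after reducing to $\bG$ simple, the non-adjoint case is immediate because $\bG^F/[\bG^F,\bG^F]$ is cyclic, and in the adjoint case one extends the isogeny $\tau:\bG_{sc}\to\bG$ to a surjection $\tilde\tau:\tbG\to\bG$ with $\tbG$ a regular embedding of $\bG_{sc}$ and connected kernel $Z(\tbG)$, so that $\bG^F=\tilde\tau(\tbG^F)$ and $[\bG^F,\bG^F]=\tilde\tau(\bG_{sc}^F)$; multiplicity-freeness then follows by inflating to $\tbG^F$ and applying Lusztig's theorem to $\Res^{\tbG^F}_{\bG_{sc}^F}$, where $\bG_{sc}=[\tbG,\tbG]$ so the theorem genuinely applies. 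To repair your argument you should prove Fact (B) by exactly this kind of lifting along a surjection with connected central kernel (or otherwise give an actual construction of the extension of $\psi_1$ to $T$); until then the proof is incomplete, since everything before and after Fact (B) is routine.
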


\begin{proof}
By \cite{Lu88}, $\Res^{\bG^F}_{[\bG,\bG]^F}$ is multiplicity free.
Note also that $[\bG_0^F,\bG_0^F] \subseteq [\bG^F,\bG^F]$ with $\bG_0=[\bG,\bG]$
(in fact both derived groups are equal to $\tau(\bG_{sc}^F)$ for a simply connected covering $\tau: \bG_{sc} \to \bG_0$ when $q$ is assumed to be large enough).
Then we may assume that $\bG$ is semisimple.
Since $\bG^F$ is isomorphic to a central product of some $\bG_i^{F_i}$ with $\bG_i$ a simple algebraic group and $F_i$ a Frobenius map on $\bG_i$, we may assume furthermore that $\bG$ is a simple algebraic group (in fact we may even assume $\bG$ is of type $D$).
Let $\tau: \bG_{sc} \to \bG$ be a simply connected covering.
Since $q$ is large enough, $\bG_{sc}^F$ is perfect, thus $\tau(\bG_{sc}^F)=[\bG^F,\bG^F]$ as above.
If $\bG$ is not of adjoint type, $\bG^F/[\bG^F,\bG^F]$ is cyclic and the assertion obviously holds.
Now, assume $\bG$ is of adjoint type.
Let $\tbG$ be a regular embedding of $\bG_{sc}$, then the map $\tau$ can be extended to a surjective map $\tilde{\tau}: \tbG \to \bG$ with $\tbG$ a regular embedding of $G$.
In particular, $\Ker\tilde{\tau} = Z(\tbG)$ is connected and thus $\bG^F=\tilde{\tau}(\tbG^F)$.
Since $\tau(\bG_{sc}^F)=[\bG^F,\bG^F]$ as $q$ is large enuough,
the assertion follows from that $\Res^{\tbG^F}_{\bG_{sc}^F}$ is multiplicity free.
\end{proof}

\begin{lem}\label{lem-srs0}
Let $\bbG=(X,R,Y,R^\vee,W\phi)$ be a generic group and denote by $l$ the rank of the root datum $(X,R,Y,R^\vee)$.
Denote by $(\bG,\bT,F)$ the triple determined by $\bbG$ and $q$, and set $\bbG(q)=\bG^F$.
Denote by $\Cl(\bbG(q))_{s.r.s}^0$ the subset of $\Cl(\bbG(q))_{s.r.s}$ consisting of conjugacy classes of elements $s$ satisfying that $zs$ and $s$ are not conjugate in $\bbG(q)$ for any nontrivial $z\in Z(\bbG(q))$.
Then we have that
\[ \lim_{q\to\infty}\frac{|Z(\bG)^{\circ F}|q^l}{|\Cl(\bbG(q))_{s.r.s}^0|}=1. \]
\end{lem}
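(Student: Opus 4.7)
The plan is to prove that $|\Cl(\bbG(q))_{s.r.s}\setminus\Cl(\bbG(q))_{s.r.s}^0|=o(|Z(\bG)^{\circ F}|q^l)$ by adapting the orbit-counting technique of Step~2 of the proof of Theorem~\ref{mainthm-srs}; combined with Theorem~\ref{mainthm-srs} itself, this will give the lemma. The fixed-point subtorus $\bT^{w'}$ used in Step~2 will be replaced here by the closed subgroup
\[ H_{w'}=\Set{t\in\bT\mid\lc{w'}t/t\in Z(\bG)}, \]
which is the preimage of $(\bT/Z(\bG))^{w'}$ under the projection $\bT\to\bT/Z(\bG)$.

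First I would verify that $Z(\bbG(q))=Z(\bG)^F$ for $q$ large: each $F$-stable maximal torus $\bT'$ contains a regular element of $\bT'^F$ for $q$ large enough, so $C_\bG(\bbG(q))\subseteq\bigcap_{\bT'}\bT'=Z(\bG)$. Next, as in the previous proofs, each strongly regular semisimple $\bbG(q)$-class has a representative $t\in\bT$ with $F(t)=t^w$ for some $w\in W$ and centralizer equal to $\bT$ itself. Conjugation by a lift in $N_\bG(\bT)^F$ then shows that $zt\sim t$ in $\bbG(q)$ for some nontrivial $z\in Z(\bG)^F$ exactly when there exists $w'\in W\setminus\{1\}$ with $\lc{w'}t/t\in Z(\bG)\setminus\{1\}$. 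Setting
\[ B^*=\Set{t\in\bT\mid F(t)=t^w\text{ for some }w\in W,\ \lc{w'}t/t\in Z(\bG)\setminus\{1\}\text{ for some }w'\in W\setminus\{1\}}, \]
we therefore have $|\Cl(\bbG(q))_{s.r.s}\setminus\Cl(\bbG(q))_{s.r.s}^0|\leq|B^*/W|$. Since $Z(\bG)$ is $F$-stable and $F$ acts on $W$ as $\phi$, one checks exactly as in Step~2 that $wF(H_{w'})\subseteq H_{w\phi(w')w^{-1}}$, so that $\bS:=\bigcap_{w''\in w'^{\grp{w\phi}}}H_{w''}$ is $wF$-stable. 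The Burnside-style count then yields
\[ |B^*/W|\leq\frac{1}{|W|}\sum_{w\in W}\sum_{w'\in W\setminus\{1\}}\bigl|\bS^{wF}\bigr|, \]
and \cite[Proposition~4.4.9]{DM20} (applied as in Step~2) bounds each $|\bS^{wF}|$ by a polynomial in $q$ of degree $\dim\bS$ with coefficients depending only on the root datum.

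The hard part---and the only new ingredient compared to Step~2---is the dimension estimate $\dim H_{w'}\leq l-1$ for $w'\neq 1$. Here one uses the faithful action of $W$ on the root lattice $\bbZ R$: it forces any $w'\neq 1$ to act nontrivially on $X(\bT/Z(\bG)^\circ)$ modulo torsion, hence on the torus $\bT/Z(\bG)^\circ$ itself, so $\dim(\bT/Z(\bG))^{w'}<l-\dim Z(\bG)^\circ$ and therefore $\dim H_{w'}\leq\dim Z(\bG)+(l-\dim Z(\bG)^\circ-1)=l-1$. Dividing the resulting bound by $|Z(\bG)^{\circ F}|q^l\sim q^{l+\dim Z(\bG)^\circ}$ gives $|B^*/W|/(|Z(\bG)^{\circ F}|q^l)\to 0$, and Theorem~\ref{mainthm-srs} completes the argument.
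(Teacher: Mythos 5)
Your proposal is correct and follows essentially the same route as the paper: the paper's own proof likewise bounds the excluded strongly regular classes by counting $W$-orbits on a subset of $\bT$ cut out by conditions $\lc{w'}t=zt$ with $z$ central and $F(t)=t^w$, reusing the Step 2 machinery (the $\grp{w\phi}$-orbit intersections, the component-group bound, and \cite[Proposition 4.4.9]{DM20}) together with a codimension-one dimension drop. The only notable difference is that the paper first reduces the central element to the finite group $Z([\bG,\bG])^F$, so its auxiliary subgroup $\bT^{(w')}$ is a finite union of cosets of $\bT^{w'}$, whereas you keep the full (possibly positive-dimensional) center $Z(\bG)$ and get the dimension drop from the faithful action of $W$ on $\bT/Z(\bG)$; both yield the bound of order $q^{\dim\bT-1}$ against the denominator of order $q^{\dim\bT}$, so your variant is equally valid.
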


\begin{proof}
Set $\bG_0=[\bG,\bG]$.
Note that if $zs$ and $s$ are conjugate in $\bbG(q)$ for some $z\in Z(\bbG(q))$, then in fact $z\in Z(\bG_0)$.
We may assume $s\in\bT$, thus $zs\in\bT$.
Let $W$ be the Weyl group of $\bG$ with respect to $\bT$.
Then by \cite[3.7.1]{Car93}, there is $w\in W$ such that $\lc{w}s=zs$.
Note also that each $\bG$-conjugacy class of strongly regular semisimple elements corresponds to a unique $\bG^F$-conjugacy class.

Let $C$ be the set of $t\in\bT$ such that $F(t)=t^w$ for some $w\in W$ and $\lc{w'}t=zt$ for some $w'\in W$ and $z\in Z(\bG_0)^F$.
Then the set $B$ in the proof of Theorem \ref{mainthm-srs} is a subset of $C$.
For any $w'\in W$, denote by $\bT^{(w')}$ the diagonalizable subgroup of $\bT$ of elements $t\in\bT$ such that $\lc{w'}t=zt$ for some $z\in Z(\bG_0)^F$.
Note that $|Z(\bG_0)|$ is finite (and bounded by the root datum $(X,R,Y,R^\vee)$ and independent of $q$).
Thus $\bT^{(w')}$ has dimension less than that of $\bT$.

As in \emph{step 2.} of the proof of Theorem \ref{mainthm-srs},
we have $C= \cup_{w\in W} \cup_{w'\in W} \left(\cap_{w''\in w'^{\grp{w\phi}}}\bT^{(w'')}\right)^{wF}$.
Then it suffices to show that
\[ \lim_{q\to\infty}\frac{|C/W|}{|Z(\bG)^{\circ F}|q^l}=0. \]
This can be proved by the same argument of \emph{step 2.} in the proof of Theorem \ref{mainthm-srs}.
\end{proof}

\begin{proof}[Proof of Theorem \ref{mainthm}]
Keep the notation in Theorem \ref{mainthm}.
Assume $\iota: \bG \to \tbG$ is a regular embedding and denote by $\iota^*: \tbG^*\to\bG^*$ the corresponding dual map.
For convenience, we denote by $F$ the Frobenius map on all these reductive groups.
By Lusztig's theory, $\Irr(\bG^F)$ is a union of Lusztig series $\cE(\bG^F,s)$ with $s$ running over a set of representatives of conjugacy classes of semisimple elements of $\bG^{*F}$.
By the proof of \cite{Lu88}, the characters in $\cE(\bG^F,s)$ are irreducible constituents of characters in $\cE(\tbG^F,\ts)$ with $s=\iota^*(\ts)$ for some $\ts\in\tbG^*$.
When $s$ is a strongly regular semisimple element of $\bG^{*F}$, $C_{\bG^*}(s)$ is a maximal torus,
thus by \cite{Lu88} (see also \cite[15.14]{CE04}),
$\cE(\bG^F,s)$ is a single point set whose unique element is denoted as $\chi_s$.
By Lemma \ref{lem-srs0} and Theorem \ref{mainthm-cl},
it suffices to prove that $\Res^{\bG^F}_{[\bG^F,\bG^F]}\chi_s$ is irreducible for any $s\in(\bG^{*F})_{s.r.s.}^0$,
where the meaning of $(\bG^{*F})_{s.r.s.}^0$ is as in Lemma \ref{lem-srs0}.

When $q$ is large enough, we have an isomorphism (\ref{equ-linear}).
Let $\chi\in\Irr(\bG^F)$, then by Lemma \ref{lem-mult-free} and Clifford theory, $\Res^{\bG^F}_{[\bG^F,\bG^F]}\chi$ is irreducible if and only if $\hat{z}\chi\neq\chi$ for any nontrivial $z\in Z(\bG^*)^F$.
By \cite[(8.20)]{CE04}, $\hat{z}\chi_s=\chi_{zs}$ for the strongly regular semisimple element $s$ of $\bG^{*F}$.
Thus $\hat{z}\chi_s=\chi_s$ if and only if $zs$ and $s$ are $\bG^{*F}$-conjugate.
But $s\in(\bG^{*F})_{s.r.s.}^0$, so $zs$ and $s$ are not $\bG^{*F}$-conjugate for any nontrivial $z\in Z(\bG^*)^F$.
So $\hat{z}\chi_s\neq\chi_s$ for any nontrivial $z\in Z(\bG^*)^F$ and thus $\Res^{\bG^F}_{[\bG^F,\bG^F]}\chi_s$ is irreducible as required.
\end{proof}

\begin{rem}
When $Z(\bG)$ is connected and $\bG_0=[\bG,\bG]$ is of simply connected, we give an explanation for the proofs of Lemma \ref{lem-srs0} and Theorem \ref{mainthm}.
Set $\bG_0=[\bG,\bG]$ and denote by $\iota: \bG_0 \to \bG$ the natural embedding.
Denote by $\bG^*,\bG_0^*$ the dual groups of $\bG,\bG_0$ respectively.
Let $\iota^*: \bG^* \to \bG_0^*$ the dual of $\iota$.
For convenience, we denote by $F$ the Frobenius map on all these groups compatible with $\iota$, $\iota^*$ and duality.
Then by \cite[Lemme 8.3]{Bon06}, the conjugacy class of $s\in\bG^{*F}$ is in $\Cl(\bG^{*F})_{s.r.s.}^0$ if and only if $\iota^*(s)$ is a strongly regular semisimple element of $\bG_0^{*F}$.
Thus $\cE(\bG_0,\iota^*(s))$ is a single set whose unique element $\chi_{\iota^*(s)}$ can be extended to $\bG$ and all characters of $\Irr(\bG^F)_{i.r.d.}$ can be obtained in this way.
\end{rem}

\section{An example: general linear and unitary groups}

We give a different direct method for the general linear groups and general unitary groups.
Let $\GL_n(-q)$ denote $\GU_n(q)$ and $\SL_n(-q)$ denotes $\SU_n(q)$.
Set $G_n(q)=\GL_n(\epsilon{q})$ with $\epsilon=\pm1$, then $[G_n(q),G_n(q)]=\SL_n(\epsilon{q})$.

We first recall a parametrization of irreducible characters of $G_n(q)$.
For an arbitrary field $k$,
denote by $k[X]$ ($\Irr(k[X])$, resp.) the set of all polynomials (all monic irreducible polynomials, resp.) over $k$.
For $\Delta(X)=X^m+a_{m-1}X^{m-1}+\cdots+a_0$ in $\bbF_{q^2}[X]$,
define $\tDelta(X)=X^ma_0^{-q}\Delta^q(X^{-1})$,
where $\Delta^q(X)$ denotes the polynomial whose coefficients are the $q$-th powers of the corresponding coefficients of $\Delta(X)$.
Set
\begin{align*}
\cF_0 &= \Set{ \Delta ~\middle|~ \Delta\in\Irr(\bbF_q[X]),\Delta\neq X }, \\
\cF_1 &= \Set{ \Delta ~\middle|~ \Delta\in\Irr(\bbF_{q^2}[X]),\Delta\neq X,\Delta=\tDelta }, \\
\cF_2 &= \Set{ \Delta\tDelta ~\middle|~ \Delta\in\Irr(\bbF_{q^2}[X]),\Delta\neq X,\Delta\neq\tDelta },
\end{align*}
and $\cF=\cF_0$ or $\cF_1\cup\cF_2$ according to $\epsilon=1$ or $-1$.
Denote the map $\barF_q^\times\to\barF_q^\times$, $\alpha \to \alpha^{\epsilon q}$ as $F_{\epsilon q}$.
Then any polynomial $\Gamma\in\cF$ can be identified with an orbit of $\grp{F_{\epsilon q}}$ on $\barF_q^\times$.
For any semisimple element $s$ in $G_n(q)$,
denote by $m_\Gamma(s)$ the multiplicity of $\Gamma$ as an elementary divisor of $s$.
By Lusztig's Jordan decomposition of characters,
irreducible characters of $G_n(q)$ can be parameterized by $G_n(q)$-conjugacy classes of pairs $(s,\lambda)$,
where $s$ is a semisimple element of $G_n(q)$ and $\lambda=\prod_\Gamma\lambda_\Gamma$ with $\lambda_\Gamma$ a partition of $m_\Gamma(s)$.
The character of $G_n(q)$ corresponding to $(s,\lambda)$ is denoted by $\chi_{s,\lambda}$.

Denote by $\Lin(G_n(q))=\Irr(G_n(q)/[G_n(q),G_n(q)])$ the set of all linear characters of $G_n(q)$.
From Clifford theory of irreducible characters and the fact that $G_n(q)/[G_n(q),G_n(q)]$ is cyclic,
we have for any irreducible character $\chi$ of $G_n(q)$ that
\[ |\Irr([G_n(q),G_n(q)]\mid\chi)|= |\set{ \eta\in\Lin\left(G_n(q)\right) \mid \chi\eta=\chi }|. \]
On the other hand, there is an isomorphism
\[ Z(G_n(q)) \to \Lin(G_n(q)),\quad z\mapsto\hz. \]
We will always dentify $Z(G_n(q))$ with the set $\set{ z\in\barF_q^\times \mid z^{q-\epsilon}=1 }$.
For any $z\in Z(G_n(q))$ and $\Gamma\in\cF$,
denote by $z\Gamma$ the polynomial in $\cF$ whose roots are $z\alpha$ with $\alpha$ running through all roots of $\Gamma$.
Then $m_{z\Gamma}(zs)=m_\Gamma(s)$.
By \cite[Theorem 4.7.1 (3)]{GM20}, the above isomorphism can be chosen such that
\[ \hz\chi_{s,\lambda}=\chi_{zs,z\lambda}, \]
where $z\lambda$ is defined as $(z\lambda)_{z\Gamma}=\lambda_\Gamma$.

Denote by $\Irr_k(G_n(q))$ the set of irreducible characters of $G_n(q)$ stable under the multiplication of the subgroup of $Z(G_n(q)) \cong \Lin(G_n(q))$ of order $k$.
Let $c_{n,k}(q)=|\Irr_k(G_n(q))|$.
In particular, $c_n(q):=c_{n,1}(q)=|\Irr(G_n(q))|$.
Fix a generator $z_k$ of the subgroup of $Z(G_n(q))$ of order $k$.
We will give an estimate for $c_{n,k}(q)$ by the same argument in \cite{Mac81}.

Denote by $\cP(\bbN)$ the set of all partitions of natural numbers (including the empty partition of $0$ for convenience).
Then it is easy to see that the set $\Irr_k(G_n(q))$ is in bijection with the set of all partition-valued maps $\mu:\barF_q^\times\to\cP(\bbN)$ satisfying the following conditions:
\[ \sum_{\zeta\in\barF_q^\times}|\mu(\zeta)|=n,\quad \mu(\zeta^{\epsilon q})=\mu(\zeta)=\mu(z_k\zeta). \]

For any partition $\lambda$, we denote by $m_i(\lambda)$ the multiplicity of $i$ appearing in $\lambda$.
For any $\mu$ as above, let
\[ u_i(X)=\prod_{\zeta\in\barF_q^\times}(1-\zeta X)^{m_i(\mu(\zeta))}. \]
The polynomial $u_i\in\barF_q[X]$ satisfies the following:
\begin{equation}\label{equ-condition-u_i}
u_i(0)=1,\quad u_i(z_kX)=u_i(X),\quad u_i\in\tcF,
\end{equation}
where $\tcF=\bbF_q[X]$ for $\epsilon=1$ while $\tcF$ is the set of all polynomials $\Delta$ in $\bbF_{q^2}[X]$ such that $\alpha^{-q}$ is a root of $\Delta$ whenever $\alpha$ is a root of $\Delta$ for $\epsilon=-1$.
Then there is a bijection between the set $\Irr_k(G_n(q))$ and the set of sequences $u=(u_1,u_2,\dots)$ of polynomials with each $u_i$ satisfying (\ref{equ-condition-u_i}) and $\sum\limits_{i\geqslant1}i\deg u_i=n$.
In fact, for an element $g$ in the conjugacy class corresponding to $\mu$, we have
\[ \det(I_n-gX)=\prod_{i\geqslant1}u_i(X)^i. \]

Let $u=(u_1,u_2,\dots)$ be as above.
Denote $n_i=\deg u_i$ and $\nu=(1^{n_1}2^{n_2}\cdots)$, then $\nu$ is a partition of $n$.
We call $\nu$ the type of the irreducible character corresponding to $u$.
Since the polynomials $u_i(X)$ satisfying $u_i(z_kX)=u_i(X)$ are exactly those polynomials whose monomials are of the form $aX^{kj}$,
the number of polynomials $u_i$ of degree $n_i$ satisfying (\ref{equ-condition-u_i}) is:
\[ \begin{cases}
0, & \textrm{if $k\nmid n_i$}; \\
1, & \textrm{if $n_i=0$}; \\
q^{\frac{n_i}{k}}-\epsilon q^{\frac{n_i}{k}-1}, & \textrm{if $k\mid n_i>0$}.
\end{cases} \]
Consequently, the number $c_{\nu,k}(q)$ of conjugacy classes in $C_{n,k}(q)$ of type $\nu=(1^{n_1}2^{n_2}\cdots)$ is
\[ c_{\nu,k}(q)=\begin{cases}
\prod\limits_{n_i>0}\left(q^{\frac{n_i}{k}}-\epsilon q^{\frac{n_i}{k}-1}\right),
& \textrm{if}~k\mid{n_i}~\textrm{for any}~i;\\
0, & \textrm{otherwise}.
\end{cases} \]
So we have that
\[ c_{n,k}(q)
=\sum\limits_{|\nu|=n}c_{\nu,k}(q)
=\sum\limits_{\small\begin{array}{c}|\nu|=n, k\mid n_i,\forall i \end{array}}
\prod\limits_{n_i>0}\left(q^{\frac{n_i}{k}}-\epsilon q^{\frac{n_i}{k}-1}\right). \]
In particular,
\begin{compactenum}[(1)]
\item
if $k\nmid n$, we have $c_{n,k}(q)=0$;
\item
if $k\mid n$, we have $c_{n,k}(q)=q^{\frac{n}{k}}+f(q)$ with $f(X)\in\bbZ[X]$ and $\deg f(X)<\frac{n}{k}$.
\end{compactenum}

By the above results, $c_{n,k}(q)=0$ if $k\nmid(n,q-\epsilon)$;
while if $k\mid(n,q-\epsilon)$, we have
\[ c_{n,k}(q)= O(q^{\frac{n}{k}}),\quad q\to\infty. \]
In particular, we have that $|\Irr(G_n(q))|=c_n(q)= O(q^n),\ q\to\infty$.
Denote by $\Irr_r(G_n(q))$ the set of irreducible characters of $G_n(q)$ whose restrictions to $[G_n(q),G_n(q)]$ are reducible.
Then if $(n,q-\epsilon)=1$, $|\Irr_r(G_n(q))|=0$;
while if $(n,q-\epsilon)>1$, we have
\[ |\Irr_r(G_n(q))|= O(q^{\frac{n}{\ell}}),\quad q\to\infty, \]
where $\ell$ is the minimal prime dividing $(n,q-\epsilon)$.
Thus Theorem \ref{mainthm} for $G_n(q)=\GL_n(\epsilon q)$ follows.

Now consider a special case $\GL_\ell(\epsilon{q})$,
where $\epsilon=\pm1$, $\GL_\ell(-q)$ denotes $\GU_\ell(q)$ and $\ell$ divides $q-\epsilon$.
\begin{compactenum}[(i)]
\item
There are exactly $q-\epsilon$ irreducible characters of $\GL_\ell(\epsilon q)$ whose restrictions to $[\GL_\ell(\epsilon q),\GL_\ell(\epsilon q)]=\SL_\ell(q)$ are not irreducible.
\item
Under the action of $(\GL_\ell(\epsilon q))$,
these $q-\epsilon$ irreducible characters form $\ell$ orbits,
each of which contains $(q-\epsilon)/\ell$ irreducible characters;
and all characters in each orbit have the same restriction to $\SL_\ell(\epsilon q))$,
which is a sum of $\ell$ irreducible characters of $\SL_\ell(\epsilon q))$.
\item
There are exactly $\ell^2$ irreducible characters of $\SL_\ell(\epsilon q))$ which can not be extended to $\GL_\ell(\epsilon q))$;
this number is independent of $q$.
\end{compactenum}

A byproduct is a generalization of the generating function $c(t)=\sum\limits_{n=0}\limits^{\infty}c_n(q)t^n$ of $c_n(q)$ in \cite{Mac81} with $t$ an indeterminant.
By \cite{FF60} and \cite{Mac81}, $c(t)=\prod\limits_{r=1}\limits^{\infty}\frac{1-\epsilon t^r}{1-qt^r}$.
Let $c_k(t)=\sum\limits_{n=0}\limits^{\infty}c_{n,k}(q)t^n$ be the generating function of $c_{n,k}(q)$.
Then by the similar argument in \cite{Mac81}, we have that
\[ c_k(t)=\prod\limits_{r=1}\limits^{\infty}\frac{1-\epsilon t^{kr}}{1-qt^{kr}}. \]



\begin{thebibliography}{99}\setlength{\itemsep}{-2pt}
\small

\bibitem{Bon06}
C. Bonnaf\'e, Sur les caract\`eres des groupes r\'eductifs finis : applications aux groupes sp\'eciaux lin\'eaires et unitaires,
Ast\'erisque 306, 2006.


\bibitem{Car93}
R.W. Carter, Finite Groups of Lie Type, Conjugacy Classes and Complex Characters,
John Wiley \& Sons, New York, 1993.

\bibitem{CE04}
M. Cabanes, M. Enguehard, Representation theory of finite reductive groups,
Cambridge University Press, Cambridge, 2004.

\bibitem{DM20}
F. Digne, J. Michel, Representations of finite groups of Lie type, 2nd Edition,
London Math. Soc. Student Texts 95, Cambridge University Press, Cambridge, 2020.

\bibitem{FF60}
W. Feit, N.J. Fine, Pairs of commuting matrices over a finite field,
Duke Math. J. 27 (1960) 91--94.

\bibitem{FS73}
J.S. Frame, W.A. Simpson, The character tables for $\SL(3,q)$, $\SU(3,q^2)$, $\PSL(3,q)$, $\PSU(3,q^2)$,
Canad. J. Math. 25 (1973) 486--494.


\bibitem{FG12}
J. Fulman, R. Guralnick,
Bounds on the number and sizes of conjugacy classes in finite Chevalley groups with applications to derangements,
Trans. Amer. Math. Soc. 364 (2012), 3023--3070.

\bibitem{GM20}
M. Geck, G. Malle, The Character Theory of Finite Groups of Lie Type: A Guided Tour,
Cambridge University Press, Cambridge, 2020.

\bibitem{Lu88}
G. Lusztig, On the representations of reductive groups with disconnected center,
\textit{Asterisque} \textbf{168}~(1988), 157--166.
 



\bibitem{Mac81}
I.G. Macdonald, Number of conjugacy classes in some finite classical groups,
Bull. Austral. Math. Soc. 23 (1981) 23--48.

\bibitem{MT11}
G. Malle, D. Testerman, Linear Algebraic Groups and Finite Groups of Lie Type,
Cambridge University Press, Cambridge, 2011.

\bibitem{St51}
R. Steinberg, The representations of $\GL(3,q)$, $\GL(4,q)$, $\PGL(3,q)$ and $\PGL(4,q)$,
Can. J. Math. 3 (1951) 225--235.

\bibitem{St65}
R. Steinberg, Regular elements of semisimple algebraic groups,
Inst. Hautes Études Sci. Publ. Math. 25 (1965) 49--80.


\end{thebibliography}
\end{document}